\date{}
\newcommand{\R}{{\mathbb R}}
\newcommand{\D}{{\mathcal D}}
\newcommand{\const}{\mathrm{const}}
\newcommand{\esslim}{\operatornamewithlimits{ess\,lim}}
\newcommand{\sign}{\operatorname{sign}}
\newcommand{\Int}{\operatorname{Int}}
\newtheorem{theorem}{Theorem}
\newtheorem{corollary}{Corollary}
\newtheorem{lemma}{Lemma}
\newtheorem{proposition}{Proposition}
\theoremstyle{definition}
\newtheorem{definition}{Definition}
\newtheorem{remark}{Remark}
\title{On the structure of entropy solutions to the Riemann problem for a degenerate nonlinear parabolic equation}
\author{Evgeny~Yu.~Panov
\\ \small
Yaroslav-the-Wise Novgorod State University, Veliky Novgorod, Russian Federation, \\ \small Research and Development Center, Veliky Novgorod, Russian Federation. }
\begin{document}
\maketitle

\begin{abstract}
We find an explicit form of entropy solutions to a Riemann problem for a degenerate nonlinear parabolic equation with piecewise constant velocity and diffusion coefficients. It is demonstrated that this solution corresponds to the minimum point of some strictly convex function of a finite number of variables.
\end{abstract}

\section{Introduction}
In a half-plane $\Pi=\{ (t,x) \ | \ t>0, x\in\R \}$, we consider a nonlinear parabolic equation
\begin{equation}\label{1}
u_t+v(u)u_x-t(a^2(u)u_x)_x=0,
\end{equation}
where $v(u),a(u)\in L^\infty(\R)$, $a(u)\ge 0$. Since the diffusion coefficient $a(u)$ may take zero value, equation (\ref{1}) is degenerate. In the case when $a(u)\equiv 0$ it reduces to a first order conservation law
\begin{equation}\label{con}
u_t+\varphi(u)_x=0,
\end{equation}
where $\varphi'(u)=v(u)$. Similarly, a general equation (\ref{1}) can be written in the conservative form
$$
u_t+\varphi(u)_x-tA(u)_{xx}=0
$$
with $A'(u)=a^2(u)$, which allows to define weak solutions of this equation. Unfortunately, weak solutions to a Cauchy problem for equation (\ref{1}) are not unique in general, and some additional entropy conditions are required. We consider the Cauchy problem with initial data
\begin{equation}\label{2}
u(0,x)=u_0(x),
\end{equation}
where $u_0(x)\in L^\infty(\R)$. Recall the notion of entropy solution (e.s. for short) in the sense of Carrillo \cite{Car}.

\begin{definition}\label{def1}
A function $u=u(t,x)\in L^\infty(\Pi)$ is called an e.s. of (\ref{1}), (\ref{2}) if

(i) the distribution $A(u)\in L^2_{loc}(\Pi)$;

(ii) for all $k\in\R$
\begin{equation}\label{en}
|u-k|_t+(\sign(u-k)(\varphi(u)-\varphi(k)))_x-(t\sign(u-k)(A(u)-A(k)))_{xx}\le 0
\end{equation}
in the sense of distributions (in $\D'(\Pi)$);

(iii) $\displaystyle\esslim_{t\to 0} u(t,\cdot)=u_0$ in $L^1_{loc}(\R)$.
\end{definition}

Entropy condition (\ref{en}) means that for each nonnegative test function $f=f(t,x)\in C_0^2(\Pi)$
\begin{equation}\label{eni}
\int_{\Pi}[|u-k|f_t+\sign(u-k)((\varphi(u)-\varphi(k))f_x+t(A(u)-A(k))f_{xx})]dtdx\ge 0.
\end{equation}
In the case of conservation laws (\ref{con}) the notion of e.s. reduces to the notion of generalized e.s. in the sense of Kruzhkov \cite{Kr}.
Taking in (\ref{en}) $k=\pm M$, $M\ge\|u\|_\infty$, we derive that
$$
u_t+\varphi(u)_x-tA(u)_{xx}=0 \ \mbox{ in } \D'(\Pi),
$$
that is, an e.s. $u$ of (\ref{1}), (\ref{2}) is a weak solution of this problem. It is known that an e.s. of (\ref{1}), (\ref{2}) always exists and is unique. In general multidimensional setting this was demonstrated in \cite{Kr} for conservation laws and in \cite{Car} for the general case. If to be precise, in \cite{Car} the case of usual diffusion term $A(u)_{xx}$ was studied but the proofs can be readily adapted to the case of the self-similar diffusion $tA(u)_{xx}$.

If $u=u(t,x)$ is a piecewise $C^2$-smooth e.s. of equation (\ref{1}) then it must satisfy this equation in classic sense in each smoothness domain. Applying relation (\ref{1}) to a test function $f=f(t,x)\in C_0^2(\Pi)$ supported in a neighborhood of a discontinuity line $x=x(t)$ and integrating by parts, we then obtain the identity
\begin{equation}\label{disc}
(-x'(t)[u]+[\varphi(u)]-t[A(u)_x])f+t[A(u)]f_x=0
\end{equation}
a.e. on the line $x=x(t)$. Here we denote by $[w]$ the jump of a function $w=w(t,x)$ on the line $x=x(t)$ so that
$$
[w]=w(t,x(t)+)-w(t,x(t)-), \quad \mbox{ where } w(t,x(t)\pm)=\lim_{y\to x(t)\pm} w(t,y).
$$
Since the functions $f$, $f_x$ are arbitrary and independent on the line $x=x(t)$, identity (\ref{disc}) implies the following two relations of Rankine-Hugoniot type
\begin{align}\label{RG1}
[A(u)]=0, \\
\label{RG}
-x'(t)[u]+[\varphi(u)]-t[A(u)_x]=0.
\end{align}
Similarly, it follows from entropy relation (\ref{eni}), after integration by parts, that
\begin{align}\label{disc1}
(-x'(t)[|u-k|]+[\sign(u-k)(\varphi(u)-\varphi(k))]-t[\sign(u-k)A(u)_x])f+ \nonumber\\ t[\sign(u-k)(A(u)-A(k))]f_x\le 0.
\end{align}
Since the function $A(u)$ increases, it follows from (\ref{RG1}) that $A(u)=\const$ when $u$ lies between the values $u(t,x(t)-)$ and $u(t,x(t)+)$. This implies that $[\sign(u-k)(A(u)-A(k))]=0$ and in view of arbitrariness of $f\ge 0$ it follows from (\ref{disc1}) that
\begin{equation}\label{disc2}
-x'(t)[|u-k|]+[\sign(u-k)(\varphi(u)-\varphi(k))]-t[\sign(u-k)A(u)_x]\le 0.
\end{equation}
In the case when $k$ lies out of the interval with the endpoints $u^\pm\doteq u(t,x(t)\pm)$ relation (\ref{disc2})
follows from (\ref{RG}) and fulfils with equality sign. When $u^-<k<u^+$ this relation reads
$$
-x'(t)(u^++u^--2k)+\varphi(u^+)+\varphi(u^-)-2\varphi(k)-t(A(u)_x^++A(u)_x^-)\le 0,
$$
where $A(u)_x^\pm=A(u)_x(t,x(t)\pm)$. Adding (\ref{RG}) to this relation and dividing the result by $2$, we arrive at the following analogue of the famous Oleinik condition (see \cite{Ol}) known for conservation laws.
\begin{equation}\label{Ol}
-x'(t)(u^+-k)+\varphi(u^+)-tA(u)_x^+-\varphi(k)\le 0 \quad \forall k\in [u^-,u^+].
\end{equation}
In the case $u^+<u^-$ this condition has the form
\begin{equation}\label{Ol-}
-x'(t)(u^+-k)+\varphi(u^+)-tA(u)_x^+-\varphi(k)\ge 0 \quad \forall k\in [u^+,u^-]
\end{equation}
and can be derived similarly.
Geometric interpretation of these conditions is that the graph of the flux function $\varphi(u)$ lies not below (not above) of the segment connecting the points $(u^-,\varphi(u^-)-tA(u)_x^-)$, $(u^+,\varphi(u^+)-tA(u)_x^+)$ when $u^-\le u\le u^+$ (respectively, when $u^+\le u\le u^-$), see Figure~\ref{fig1}. We take here into account that in view of condition (\ref{RG}) the vector $(-x'(t),1)$ is a normal to the indicated segment. We also notice that it follows from relations (\ref{Ol}), (\ref{Ol-}) with $k=u^\pm$ and from the Rankine-Hugoniot condition (\ref{RG}) that $A(u)_x^\pm\ge 0$ ($A(u)_x^\pm\le 0$) whenever $u^+>u^-$ ($u^+<u^-$).

\begin{figure}[h!]
\centering
\includegraphics[width=4in]{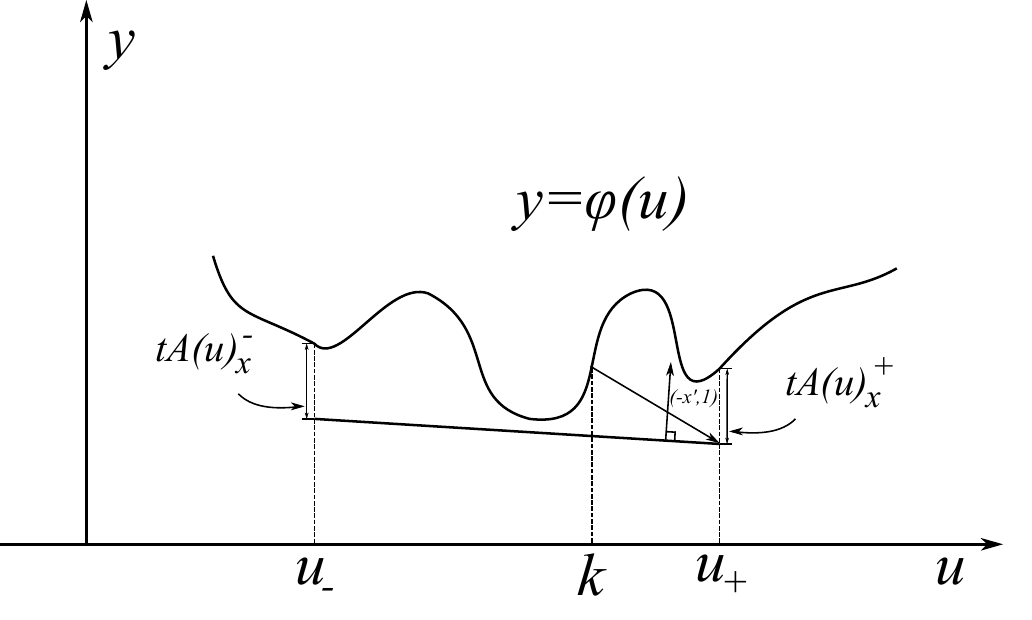}
\caption{Oleinik condition.
\label{fig1}}
\end{figure}

\section{The case of piecewise constant coefficients.}

Below we will assume that the functions $v(u)$, $a(u)$ are piecewise constant, $v(u)=v_k$, $a(u)=a_k$ when
$u_k<u<u_{k+1}$, $k=0,\ldots,n-1$, where $$\alpha=u_0<u_1<\cdots<u_{n-1}<u_n=\beta.$$ We will study problem (\ref{1}), (\ref{2}) with the Riemann data $u_0(x)=\left\{\begin{array}{lr} \alpha, & x<0, \\ \beta, & x>0.\end{array}\right.$ Since this problem is invariant
under the scaling transformations $t\to \lambda t$, $x\to\lambda x$, $\lambda>0$ then, by the uniqueness, the e.s.  $u=u(t,x)$ is self-similar: $u(t,x)=u(\lambda t,\lambda x)$. This implies that $u=u(x/t)$. Suppose that $a_k>0$. Then in a domain where $u_k<u(\xi)<u_{k+1}$ with $\xi=x/t$ equation (\ref{1}) reduces to the second order ODE
$$
(v_k-\xi) u'-a_k^2 u''=0,
$$
the general solution of which is $u=C_1 F((\xi-v_k)/a_k)+C_2$; $C_1,C_2=\const$, where $$F(z)=\frac{1}{\sqrt{2\pi}}\int_{-\infty}^z e^{-s^2/2}ds$$ is the error function.
Therefore, it is natural to seek the e.s. of our problem in the following form
\begin{align}\label{sol}
u(\xi)=\left\{\begin{array}{lcr}
u_k+\frac{u_{k+1}-u_k}{F((\xi_{k+1}-v_k)/a_k)-F((\xi_k-v_k)/a_k)}(F((\xi-v_k)/a_k)-F((\xi_k-v_k)/a_k)) & , & a_k>0, \\
u_k & , & a_k=0, \end{array}\right. \\ \nonumber
\xi_k<\xi<\xi_{k+1}, \ k=0,\ldots,d,
\end{align}
where
$$d=\left\{\begin{array}{lcr} n-1 & , & a_{n-1}>0, \\ n & , & a_{n-1}=0, \end{array}\right. \quad -\infty=\xi_0<\xi_1\le \cdots\le \xi_d<\xi_{d+1}=+\infty,$$ and we agree that $a_n=0$, $F(-\infty)=0$, $F(+\infty)=1$.
We also assume that $\xi_{k+1}>\xi_k$ whenever $a_k>0$.
The rays $x=\xi_kt$ for finite $\xi_k$ are (weak or strong) discontinuity lines of $u$, they correspond to discontinuity points $\xi_k$ of the function $u(\xi)$. Observe that conditions (\ref{RG1}), (\ref{RG}) turns into the following relations at points $\xi_k$
\begin{align}\label{RG1a}
[A(u)]=A(u(\xi_k+))-A(u(\xi_k-))=0, \\
\label{RGa}
-\xi_k [u]+[\varphi(u)]-[A(u)']=-\xi_k(u(\xi_k+)-u(\xi_k-))+\varphi(u(\xi_k+))-\nonumber\\ \varphi(u(\xi_k-))-A(u)'(\xi_k+)+A(u)'(\xi_k-)=0.
\end{align}
Here $w(\xi_k\pm)$ denotes unilateral limits of a function $w(\xi)$ at the point $\xi_k$. Similarly, the Oleinik condition
(\ref{Ol}) reads
\begin{equation}\label{Ola}
-\xi_k(u(\xi_k+)-k)+\varphi(u(\xi_k+))-A(u)'(\xi_k+)-\varphi(k)\le 0 \quad \forall k\in [u(\xi_k-),u(\xi_k+)].
\end{equation}
Notice that our solution (\ref{sol}) is a nonstrictly increasing function of the self-similar variable $\xi$ and, therefore, $u(\xi_k-)\le u(\xi_k+)$.

Let us firstly analyze the solution (\ref{sol}) in the case $\xi_{k-1}<\xi_k<\xi_{k+1}$. If $a_{k-1},a_k>0$ then $u(\xi_k-)=u(\xi_k+)=u_k$ so that condition (\ref{RG1a}) fulfils while (\ref{RGa}) reduces to the equality $[A(u)']=0$,
which is revealed as
\begin{align}\label{c++}
\frac{a_k(u_{k+1}-u_k)}{F((\xi_{k+1}-v_k)/a_k)-F((\xi_k-v_k)/a_k)}F'((\xi_k-v_k)/a_k)=\nonumber\\
\frac{a_{k-1}(u_k-u_{k-1})}{F((\xi_k-v_{k-1})/a_{k-1})-F((\xi_{k-1}-v_{k-1})/a_{k-1})}F'((\xi_k-v_{k-1})/a_{k-1}).
\end{align}
In this situation $\xi=\xi_k$ is a weak discontinuity point, the function $u(\xi)$ itself is continuous, only its derivative $u'(\xi)$ may be discontinuous. Moreover, it follows from (\ref{c++}) that both functions $u(\xi)$ and $u'(\xi)$ are continuous at point $\xi_k$ if $a_k=a_{k-1}>0$.

If $a_{k-1}>a_k=0$ then again $u(\xi)$ is continuous at $u_k$ and (\ref{RGa}) reduces to the relation
\begin{equation}\label{c+0}
\frac{a_{k-1}(u_k-u_{k-1})}{F((\xi_k-v_{k-1})/a_{k-1})-F((\xi_{k-1}-v_{k-1})/a_{k-1})}F'((\xi_k-v_{k-1})/a_{k-1})=0,
\end{equation}
which is impossible. If $a_k>a_{k-1}=0$ then $u(\xi_k-)=u_{k-1}<u_k=u(\xi_k+)$, that is, $\xi_k$ is a strong discontinuity point. Condition (\ref{RG1a}) holds because $A(u)$ is constant on $[u_{k-1},u_k]$ ($A'(u)=a_{k-1}^2=0$) while (\ref{RGa}) turns into
\begin{equation}\label{c0+}
(v_{k-1}-\xi_{k})(u_k-u_{k-1})-\frac{a_k(u_{k+1}-u_k)}{F((\xi_{k+1}-v_k)/a_k)-F((\xi_k-v_k)/a_k)}F'((\xi_k-v_k)/a_k)=0,
\end{equation}
where we use the fact that $\varphi(u_k)-\varphi(u_{k-1})=v_{k-1}(u_k-u_{k-1})$. It remains to analyze the situation
when $a_k=a_{k-1}=0$. In this case again $u(\xi_k-)=u_{k-1}<u_k=u(\xi_k+)$ and $A(u_{k-1})=A(u_k)$ while condition (\ref{RGa}) turns into the simple relation
\begin{equation}\label{c00}
\xi_{k}=v_{k-1}.
\end{equation}
Finally, since the function $\varphi(u)$ is affine on the segment $[u_{k-1},u_k]$ and $A(u)'(\xi_k\pm)\ge 0$, then entropy relation (\ref{Ola}) is always satisfied.

Now we consider the case when there exists a nontrivial family of mutually equaled values $\xi_i$, $\xi_i=\xi_k$ for $i=k,\ldots,l$, where $l>k$. We can assume that this family is maximal, that is, $$-\infty\le\xi_{k-1}<\xi_k=\cdots =\xi_l<\xi_{l+1}\le +\infty.$$
Then $a_i=0$ for $i=k,\ldots,l-1$, and the point $\xi=c\doteq\xi_k$ is a discontinuity point of $u(\xi)$ with the unilateral limits
$$u(c+)=u_l, \ u(c-)=u_{k'}, \ \mbox{ where } k'=\left\{\begin{array}{lcr} k & , & a_{k-1}>0, \\ k-1 & , & a_{k-1}=0 \end{array}\right.. $$
Since $a(u)=0$ for $u(c-)<u<u(c+)$, we find that $A(u(c-))=A(u(c+))$ and condition (\ref{RG1a}) is satisfied.
Further, we notice that
\begin{align*}
\sum_{i=k'}^{l-1}(-\xi_{i+1}(u_{i+1}-u_i))=-c\sum_{i=k'}^{l-1}(u_{i+1}-u_i)=-c(u_l-u_{k'}), \\ \sum_{i=k'}^{l-1}v_i(u_{i+1}-u_i)=\sum_{i=k'}^{l-1}(\varphi(u_{i+1})-\varphi(u_i))=\varphi(u_l)-\varphi(u_{k'}).
\end{align*}
Therefore, condition (\ref{RGa}) can be written in the form
\begin{equation}\label{RGb}
\sum_{i=k'}^{l-1}(v_i-\xi_{i+1})(u_{i+1}-u_i)-(A(u)'(c+)-A(u)'(c-))=0,
\end{equation}
where, as is easy to verify,
\begin{align}\label{der-}
A(u)'(c-)=\left\{\begin{array}{lcr} 0 & , & a_{k-1}=0, \\
\frac{a_{k-1}(u_k-u_{k-1})}{F((\xi_k-v_{k-1})/a_{k-1})-F((\xi_{k-1}-v_{k-1})/a_{k-1})}F'((\xi_k-v_{k-1})/a_{k-1}) & , & a_{k-1}>0, \end{array}\right. \\
\label{der+}
A(u)'(c+)=\left\{\begin{array}{lcr} 0 & , & a_l=0, \\
\frac{a_l(u_{l+1}-u_l)}{F((\xi_{l+1}-v_l)/a_l)-F((\xi_l-v_l)/a_l)}F'((\xi_l-v_l)/a_l) & , & a_l>0. \end{array}\right.
\end{align}
In the similar way we can write the Oleinik condition (\ref{Ola}) as follows
\begin{equation}\label{Olb}
\sum_{i=j}^{l-1}(v_i-\xi_{i+1})(u_{i+1}-u_i)-A(u)'(c+)\le 0 \quad k'<j<l.
\end{equation}
We use here the fact the function $\varphi(u)$ is piecewise affine and, therefore, it is enough to verify the Oleinik condition (\ref{Ola}) only at the nodal points $k=u_j$.

The above reasoning remains valid also in the case when $l=k$. In this case, relation (\ref{RGb}) reduces to one of conditions (\ref{c++}), (\ref{c+0}), (\ref{c0+}), (\ref{c00}) while (\ref{Olb}) is trivial.

\section{The entropy function}
We introduce the convex cone
$\Omega\subset\R^d$ consisting of points $\bar\xi=(\xi_1,\ldots,\xi_d)$ with increasing coordinates,
$\xi_1\le\xi_2\le\cdots\le\xi_d$ such that $\xi_{k+1}>\xi_k$ whenever $a_k>0$, $k=1,\ldots,d-1$.
Each point $\bar\xi\in\Omega$ determines a function $u(\xi)$ in correspondence with formula (\ref{sol}).
Assume firstly that $\bar\xi\in\Int\Omega$, that is, the values $\xi_k$ are strictly increasing. Then conditions
(\ref{c++}), (\ref{c+0}), (\ref{c0+}), (\ref{c00}) coincides with the equality $\frac{\partial}{\partial\xi_k} E(\bar\xi)=0$, where
\begin{align}\label{E}
E(\bar\xi)=-\sum_{k=0,\ldots,n-1,a_k>0} (a_k)^2(u_{k+1}-u_k)\ln (F((\xi_{k+1}-v_k)/a_k)-F((\xi_k-v_k)/a_k))+\nonumber\\
\frac{1}{2}\sum_{k=0,\ldots,n-1,a_k=0}(u_{k+1}-u_k)(\xi_{k+1}-v_k)^2, \quad \bar\xi=(\xi_1,\ldots,\xi_d)\in\Omega.
\end{align}
We will call this function \textbf{the entropy} because it depends only on the discontinuities of a solution.
Thus, for $\bar\xi\in\Int\Omega$ the e.s. (\ref{sol}) corresponds to a critical point of the entropy. We are going to demonstrate that the entropy is strictly convex and coercive in $\Omega$. Therefore, it has a unique global minimum point in $\Omega$. In the case when this minimum point lies in $\Int\Omega$ it is a unique critical point.

Obviously, $E(\bar\xi)\in C^\infty(\Omega)$. Notice that for all $k=0,\ldots,n-1$, such that $a_k>0$
$$
\ln (F((\xi_{k+1}-v_k)/a_k)-F((\xi_k-v_k)/a_k))<0.
$$
Therefore, all terms in expression (\ref{E}) are nonnegative and, in particular, $E(\bar\xi)\ge 0$.

\begin{proposition}[coercivity]\label{pro1}
The sets $E(\bar\xi)\le c$ are compact for each constant $c\ge 0$.
\end{proposition}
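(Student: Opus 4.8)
The plan is to prove compactness by showing that each sublevel set $\{E\le c\}$ is simultaneously bounded and closed in $\R^d$, and then invoking Heine--Borel. The whole argument rests on the observation already recorded before the statement: every summand in (\ref{E}) is nonnegative, so $E$ dominates any single one of its terms. Consequently, to force $E\to\infty$ it suffices to exhibit one term that blows up, and the strategy is to locate, in each scenario, a convenient such term.

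For boundedness, the key simplification is that the coordinates are ordered, $\xi_1\le\cdots\le\xi_d$, so that $|\xi_j|\le\max(|\xi_1|,|\xi_d|)$ for every $j$; hence $\{E\le c\}$ is bounded as soon as $\xi_1$ is bounded below and $\xi_d$ is bounded above on it. I would control these two extreme coordinates by the two extreme terms of $E$. The leftmost term ($k=0$) equals $-a_0^2(u_1-u_0)\ln F((\xi_1-v_0)/a_0)$ when $a_0>0$ and $\frac12(u_1-u_0)(\xi_1-v_0)^2$ when $a_0=0$; in either case it tends to $+\infty$ as $\xi_1\to-\infty$ (using $F(-\infty)=0$ in the logarithmic case), so by nonnegativity of the remaining terms $E\to\infty$, and $\xi_1$ is bounded below on $\{E\le c\}$. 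Symmetrically, the rightmost term ($k=n-1$, involving $\xi_d$ and $\xi_{d+1}=+\infty$) tends to $+\infty$ as $\xi_d\to+\infty$, using $F(+\infty)=1$ when $a_{n-1}>0$; this bounds $\xi_d$ above. Thus $\{E\le c\}$ is bounded.

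For closedness in $\R^d$, I would take $\bar\xi^{(m)}\in\{E\le c\}$ converging to some $\bar\xi^*\in\R^d$ and verify that $\bar\xi^*\in\Omega$ with $E(\bar\xi^*)\le c$. The monotonicity $\xi_1^*\le\cdots\le\xi_d^*$ passes to the limit automatically. The only way $\bar\xi^*$ could fail to lie in $\Omega$ is that some strict inequality degenerates, i.e. $\xi_{k+1}^*=\xi_k^*$ for an index $k$ with $a_k>0$; but then $F((\xi_{k+1}^{(m)}-v_k)/a_k)-F((\xi_k^{(m)}-v_k)/a_k)\to 0^+$, so the corresponding logarithmic term, hence $E(\bar\xi^{(m)})$, tends to $+\infty$, contradicting $E\le c$. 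Therefore $\bar\xi^*\in\Omega$, and continuity of $E\in C^\infty(\Omega)$ gives $E(\bar\xi^*)\le c$. Being closed and bounded in $\R^d$, the set $\{E\le c\}$ is compact.

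The steps are essentially routine once these reductions are in place, and the only place where the geometry of $\Omega$ genuinely enters is the dichotomy at the boundary. I expect the main obstacle to be bookkeeping rather than difficulty: one must check that the extreme terms used for boundedness are actually present and involve exactly $\xi_1$ and $\xi_d$ across all four combinations of $a_0,a_{n-1}\in\{0\}\cup(0,\infty)$ (recalling the definition of $d$), and that the degenerate faces $\xi_{k+1}=\xi_k$ excluded from $\Omega$ are precisely the ones carrying a logarithmic term, which is exactly what makes the closedness argument go through.
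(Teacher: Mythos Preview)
Your proposal is correct and follows essentially the same route as the paper: control the extreme coordinates $\xi_1,\xi_d$ via the $k=0$ and $k=n-1$ terms, and use the logarithmic terms to prevent degeneration of the gaps $\xi_{k+1}-\xi_k$ at indices with $a_k>0$. The only cosmetic difference is that the paper extracts an explicit quantitative bound $\xi_{k+1}-\xi_k\ge a_k\delta$ with $\delta=\exp(-c/m)$ and packages everything into a concrete compact $K\subset\Omega$, whereas you argue closedness sequentially by contradiction; both arguments encode the same mechanism.
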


\begin{proof}
If $E(\bar\xi)\le c$ then it follows from nonnegativity of all terms in (\ref{E}) that for all $k=0,\ldots,n-1$
\begin{align}\label{3a}
-(a_k)^2(u_{k+1}-u_k)\ln (F((\xi_{k+1}-v_k)/a_k)-F((\xi_k-v_k)/a_k))\le E(\bar\xi)\le c \mbox{ if } a_k>0, \\
\label{3b}
(u_{k+1}-u_k)(\xi_{k+1}-v_k)^2/2\le c \ \mbox{ if } a_k=0.
\end{align}
Relation (\ref{3a}) implies the estimate
\begin{equation}\label{4}
F((\xi_{k+1}-v_k)/a_k)-F((\xi_k-v_k)/a_k)\ge \delta\doteq\exp(-c/m)>0,
\end{equation}
where $\displaystyle m=\min_{k=0,\ldots,n-1, a_k>0}(a_k)^2(u_{k+1}-u_k)>0$.
If $a_0>0$ relation (\ref{4}) with $k=0$ reads $F((\xi_1-v_0)/a_0)>\delta$ (notice that $F((\xi_0-v_0)/a_0)=F(-\infty)=0$), which implies that
$$
\xi_1\ge v_0+a_0F^{-1}(\delta).
$$
On the other hand, if $a_0=0$ then $(u_1-u_0)(\xi_1-v_0)^2\le 2c$, in view of (\ref{3b}) with $k=0$, and
$$
\xi_1\ge v_0-(2c/(u_1-u_0))^{1/2}.
$$
In any case,
\begin{equation}\label{lb}
\xi_1\ge r_1\doteq v_0+\min(a_0F^{-1}(\delta),-(2c/(u_1-u_0))^{1/2}).
\end{equation}
To get an upper bound, we remark that in the case $a_{n-1}>0$ it follows from (\ref{4}) with $k=d=n-1$ that
$F(-(\xi_{n-1}-v_{n-1})/a_{n-1})=1-F((\xi_{n-1}-v_{n-1})/a_{n-1})\ge \delta$ (observe that $F((\xi_n-v_{n-1})/a_{n-1})=F(+\infty)=1$), which implies the estimate
$$
\xi_d\le v_{n-1}-a_{n-1}F^{-1}(\delta).
$$
If $a_{n-1}=0$ then $d=n$ and in view of inequality (\ref{3b}) with $k=n-1$ we find $(u_n-u_{n-1})(\xi_n-v_{n-1})^2/2\le c$, that is,
$$
\xi_d\le v_{n-1}+(2c/(u_n-u_{n-1}))^{1/2}.
$$
In both cases
\begin{equation}\label{ub}
\xi_d\le r_2\doteq v_{n-1}+\max(-a_{n-1}F^{-1}(\delta),(2c/(u_n-u_{n-1}))^{1/2}).
\end{equation}
Since all coordinates of $\bar\xi$ lie between $\xi_1$ and $\xi_d$, estimates (\ref{lb}), (\ref{ub}) imply the bound
$$
|\bar\xi|_\infty=\max_{k=1,\ldots,d} |\xi_k|\le r\doteq\max(|r_1|,|r_2|).
$$
Further, since $F'(x)=\frac{1}{\sqrt{2\pi}}e^{-x^2/2}<1$, the function $F(x)$ is Lipschitz with constant $1$ and it follows from (\ref{4}) that
$$
(\xi_{k+1}-\xi_k)/a_k\ge F((\xi_{k+1}-v_k)/a_k)-F((\xi_k-v_k)/a_k)\ge \delta, \quad k=1,\ldots,d-1, a_k>0.
$$
We find that $$\xi_{k+1}-\xi_k\ge a_k\delta$$
(this also includes the case $a_k=0$).
We conclude tat the set $E(\bar\xi)\le c$ lies in the compact set
$$
K=\{ \ \bar\xi=(\xi_1,\ldots,\xi_d)\in\R^d \ | \ |\bar\xi|_\infty\le r, \ \xi_{k+1}-\xi_k\ge a_k\delta \ \forall k=1 ,\ldots,d-1 \ \}.
$$
By the continuity of $E(\bar\xi)$ the set $E(\bar\xi)\le c$ is a closed subset of $K$ and therefore is compact.
\end{proof}
We take $c>N\doteq\inf E(\bar\xi)$. Then the set $E(\bar\xi)\le c$ is not empty. By Proposition~\ref{pro1} this set is compact and therefore the continuous function $E(\bar\xi)$ reaches the minimal value on it, which is evidently equal to $N$. We proved the existence of global minimum $E(\bar\xi_0)=\min E(\bar\xi)$. The uniqueness of the minimum point is a consequence of strict convexity of the entropy, which is stated in Proposition~\ref{pro2} below. The following lemma plays a key role.

\begin{lemma}\label{lem2}
The function $P(x,y)=-\ln (F(x)-F(y))$ is strictly convex in the half-plane $x>y$.
\end{lemma}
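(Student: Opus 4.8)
The plan is to prove strict convexity by verifying that the Hessian of $P$ is positive definite at every point of the open convex region $\{x>y\}$; this is a sufficient condition for strict convexity on a convex open set. Writing $g=F(x)-F(y)>0$ and using the crucial identity $F''(z)=-zF'(z)$ (which holds because $F'(z)=\frac{1}{\sqrt{2\pi}}e^{-z^2/2}$), a direct differentiation gives
\[
P_{xx}=\frac{F'(x)(F'(x)+xg)}{g^2},\quad P_{yy}=\frac{F'(y)(F'(y)-yg)}{g^2},\quad P_{xy}=-\frac{F'(x)F'(y)}{g^2}.
\]
Denoting the Hessian by $H$, positive definiteness is equivalent to the top-left entry being positive, i.e. $F'(x)+xg>0$ (giving $P_{xx}>0$), together with $\det H>0$. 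A short computation collapses the determinant to
\[
\det H=\frac{F'(x)F'(y)}{g^3}\bigl(xF'(y)-yF'(x)-xyg\bigr),
\]
so the determinant condition reduces to proving the single inequality $D(x,y):=xF'(y)-yF'(x)-xyg>0$ for all $x>y$.

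I would first dispose of the diagonal term by a monotonicity trick. Fixing $y$ and setting $h(x)=F'(x)+xg$, one finds $h'(x)=F''(x)+g+xF'(x)=g>0$, while $h(y)=F'(y)>0$; hence $h(x)>F'(y)>0$ for every $x>y$, which is exactly $P_{xx}>0$. The symmetric auxiliary function $k(y)=F'(y)-yg$ satisfies $k'(y)=-g<0$ and $k(x)=F'(x)$, so $k(y)>F'(x)>0$ for $y<x$. These two elementary bounds, namely $F'(x)+xg>F'(y)$ and $F'(y)-yg>F'(x)$, are precisely the inputs needed for the harder estimate.

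The determinant inequality $D>0$ is the main obstacle, and I would settle it by splitting into three exhaustive sign cases. If $y<0<x$, then all three summands of $D=xF'(y)-yF'(x)-xyg$ are individually positive (note $xy<0$). If $y\ge 0$, so that $x>0$, I rewrite $D=x\,k(y)-yF'(x)$ and use $k(y)>F'(x)$ together with $x>0$ to obtain $D>(x-y)F'(x)>0$. If $x\le 0$, so that $y<0$, I rewrite $D=xF'(y)-y\,h(x)$ and use $h(x)>F'(y)$ together with $-y>0$ to obtain $D>(x-y)F'(y)>0$. Since these cases cover all of $\{x>y\}$, the Hessian is positive definite throughout this open convex region, yielding the strict convexity of $P$.

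I note finally that the non-strict convexity of $P$ is equivalent to the log-concavity of $g=F(x)-F(y)=\int_y^x F'(s)\,ds$, which one could also derive abstractly from the log-concavity of the Gaussian density $F'$ via the Pr\'ekopa--Leindler inequality. However, that route only delivers a non-strict inequality, whereas the direct Hessian argument above produces strictness, which is what is required for the uniqueness of the minimiser of the entropy.
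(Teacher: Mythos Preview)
Your proof is correct. Both you and the paper begin identically: compute the Hessian, use $F''=-zF'$, and reduce to showing the $2\times 2$ matrix with entries (up to the positive factor $g^{-2}$)
\[
Q_{11}=F'(x)\bigl(F'(x)+xg\bigr),\quad Q_{22}=F'(y)\bigl(F'(y)-yg\bigr),\quad Q_{12}=-F'(x)F'(y)
\]
is positive definite. The divergence is in how positive definiteness is established. The paper applies the Cauchy mean value theorem to produce $z\in(y,x)$ with $F'(x)-F'(y)=-z\,g$, which rewrites the diagonal entries as $Q_{11}=F'(x)g(x-z)+F'(x)F'(y)$ and $Q_{22}=F'(y)g(z-y)+F'(x)F'(y)$; the matrix then splits as $R_1+F'(x)F'(y)R_2$ with $R_1$ diagonal positive and $R_2=\bigl(\begin{smallmatrix}1&-1\\-1&1\end{smallmatrix}\bigr)\ge 0$, finishing without any case distinction. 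You instead use Sylvester's criterion: your monotonicity observations $h'(x)=g$, $k'(y)=-g$ are effectively an integrated form of the same mean-value identity (they yield $h(x)>F'(y)$ and $k(y)>F'(x)$, equivalent to $x-z>0$ and $z-y>0$), and you then handle the determinant $D=xF'(y)-yF'(x)-xyg$ by an explicit sign-of-$(x,y)$ trichotomy. The paper's decomposition is cleaner and case-free; your route is more elementary in that it avoids both the mean value theorem and any matrix manipulation, at the cost of the three cases. Your closing remark about Pr\'ekopa--Leindler is apt and correctly identifies why one needs the direct computation here rather than the abstract log-concavity argument.
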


\begin{proof}
The function $P(x,y)$ is infinitely differentiable in the domain $x>y$. To prove the lemma, we need to establish that the Hessian $D^2 P$ is positive definite at every point. By the direct computation we find
\begin{align*}
\frac{\partial^2}{\partial x^2} P(x,y)=\frac{(F'(x))^2-F''(x)(F(x)-F(y))}{(F(x)-F(y))^2}, \\
\frac{\partial^2}{\partial y^2} P(x,y)=\frac{(F'(y))^2-F''(y)(F(y)-F(x))}{(F(x)-F(y))^2}, \
\frac{\partial^2}{\partial x\partial y} P(x,y)=-\frac{F'(x)F'(y)}{(F(x)-F(y))^2}.
\end{align*}
We have to prove positive definiteness of the matrix $Q=(F(x)-F(y))^2 D^2 P(x,y)$ with the components
\begin{align*}
Q_{11}=(F'(x))^2-F''(x)(F(x)-F(y)), \\ Q_{22}=(F'(y))^2-F''(y)(F(y)-F(x)), \ Q_{12}=Q_{21}=-F'(x)F'(y).
\end{align*}
Since $F'(x)=e^{-x^2/2}$, then $F''(x)=-xF'(x)$ and the diagonal elements of this matrix can be written in the form
\begin{align*}
Q_{11}=F'(x)(x(F(x)-F(y))+F'(x))= \\ F'(x)(x(F(x)-F(y))+(F'(x)-F'(y)))+F'(x)F'(y), \\
Q_{22}=F'(y)(y(F(y)-F(x))+(F'(y)-F'(x)))+F'(x)F'(y).
\end{align*}
By Cauchy mean value theorem there exists such a value $z\in (y,x)$ that
$$
\frac{F'(x)-F'(y)}{F(x)-F(y)}=\frac{F''(z)}{F'(z)}=-z.
$$
Therefore,
\begin{align*}
Q_{11}=F'(x)(F(x)-F(y))(x-z)+F'(x)F'(y), \\ Q_{22}=F'(y)(F(x)-F(y))(z-y)+F'(x)F'(y),
\end{align*}
and it follows that $Q=R_1+F'(x)F'(y)R_2$, where $R_1$ is a diagonal matrix with the positive diagonal elements
$F'(x)(F(x)-F(y))(x-z)$, $F'(y)(F(x)-F(y))(z-y)$ while $R_2=\left(\begin{smallmatrix} 1 & -1 \\ -1 & 1\end{smallmatrix}\right)$. Since $R_1>0$, $R_2\ge 0$, then the matrix $Q>0$, as was to be proved.
\end{proof}

\begin{corollary}\label{cor1}
The functions $P(x,-\infty)=-\ln F(x)$, $P(+\infty,x)=-\ln(1-F(x))$ of single variable are strictly convex.
\end{corollary}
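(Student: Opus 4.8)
The plan is to reduce both assertions to an elementary sign analysis, because a naive passage to the limit $y\to-\infty$ (respectively $x\to+\infty$) in Lemma~\ref{lem2} yields only \emph{weak} convexity: a pointwise limit of strictly convex functions need not remain strictly convex. I would therefore treat $g(x)\doteq-\ln F(x)$ by a direct computation. Differentiating twice and using the identity $F''(x)=-xF'(x)$ already exploited in the proof of the lemma, one gets
\[
g''(x)=\frac{(F'(x))^2-F''(x)F(x)}{F(x)^2}=\frac{F'(x)\,h(x)}{F(x)^2},\qquad h(x)\doteq F'(x)+xF(x).
\]
Since $F>0$ and $F'>0$ everywhere, strict convexity of $g$ is \emph{equivalent} to strict positivity of $h$ on all of $\R$. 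One may note that $F'(x)h(x)$ is precisely the limit of the quantity $Q_{11}$ from Lemma~\ref{lem2} as $y\to-\infty$ (where $F(y),F'(y)\to0$), so this is the natural boundary version of the one-variable estimate hidden inside that proof.

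To establish $h>0$ I would combine monotonicity with the behaviour at $-\infty$. Differentiating gives $h'(x)=F''(x)+F(x)+xF'(x)=F(x)>0$, so $h$ is strictly increasing; it then suffices to show $\lim_{x\to-\infty}h(x)=0$, since this forces $h(x)>0$ for every finite $x$. The limit follows from the standard Gaussian tail bound: for $x<0$ and $s\le x$ one has $\tfrac{-s}{-x}\ge 1$, whence
\[
F(x)\le\frac{1}{-x}\int_{-\infty}^{x}(-s)\,e^{-s^2/2}\,\frac{ds}{\sqrt{2\pi}}=\frac{F'(x)}{-x},
\]
so that $0<-xF(x)\le F'(x)$. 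As $F'(x)\to0$ when $x\to-\infty$, both $F'(x)$ and $xF(x)$ tend to $0$, hence $h(x)=F'(x)+xF(x)\to0$. Strict monotonicity then gives $h>0$, and therefore $g''>0$, which is the strict convexity of $-\ln F(x)$.

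The second function is then obtained by symmetry rather than by a fresh computation. Since $F$ is the standard normal distribution function, the reflection identity $F(-x)=1-F(x)$ holds, so
\[
P(+\infty,x)=-\ln(1-F(x))=-\ln F(-x)=g(-x).
\]
Because $g$ is strictly convex and $x\mapsto-x$ is an invertible affine map, the composition $g(-x)$ is again strictly convex, completing the proof. The only genuinely delicate step is the boundary positivity $h>0$; everything else is either a routine differentiation or a formal consequence of the lemma and of the symmetry of $F$.
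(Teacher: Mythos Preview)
Your proof is correct and follows essentially the same route as the paper: both arguments use the symmetry $1-F(x)=F(-x)$ to reduce to $g(x)=-\ln F(x)$, rewrite $g''$ in terms of $h(x)=F'(x)+xF(x)$, and exploit the identity $h'(x)=F(x)>0$. The only difference is in the last step: the paper first obtains $h\ge 0$ as the limit $y\to-\infty$ of the inequality $Q_{11}\ge 0$ from Lemma~\ref{lem2} and then argues by contradiction (a zero of $h$ would be a minimum, forcing $h'=F=0$ there), whereas you bypass the lemma entirely and establish $h>0$ from strict monotonicity together with the Gaussian tail bound $-xF(x)\le F'(x)$, which gives $\lim_{x\to-\infty}h(x)=0$. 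Your variant is slightly more self-contained, while the paper's makes explicit that the one-variable statement is genuinely a boundary case of the two-variable lemma; either way the core computation is identical.
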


\begin{proof}
Since $1-F(x)=F(-x)$, we see that $P(+\infty,x)=P(-x,-\infty)$, and it is sufficient to prove the strict convexity of the function
$P(x,-\infty)=-\ln F(x)$. By Lemma~\ref{lem2} in the limit as $y\to-\infty$ we obtain that this function is convex, moreover,
$$
0\le (F(x))^2\frac{d^2}{dx^2}P(x,-\infty)=\lim_{y\to-\infty}Q_{11}=F'(x)(xF(x)+F'(x)).
$$
If $\frac{d^2}{dx^2}P(x,-\infty)=0$ at some point $x=x_0$ then $0=x_0F(x_0)+F'(x_0)$ is the minimum of the nonnegative function $xF(x)+F'(x)$. Therefore, its derivative $(xF+F')'(x_0)=0$. Since $F''(x)=-xF'(x)$, this derivative
$$
(xF+F')'(x_0)=F(x_0)+x_0F'(x_0)+F''(x_0)=F(x_0)>0.
$$
But this contradicts our assumption. We conclude that $\frac{d^2}{dx^2}P(x,-\infty)>0$ and the function $P(x,-\infty)$ is strictly convex.
\end{proof}

\begin{proposition}[convexity]\label{pro2}
The entropy function $E(\bar\xi)$ is strictly convex on $\Omega$.
\end{proposition}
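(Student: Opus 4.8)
The plan is to exploit the additive structure of $E(\bar\xi)$ together with Lemma~\ref{lem2} and Corollary~\ref{cor1}. Write $E(\bar\xi)=\sum_{k=0}^{n-1}E_k(\bar\xi)$, where for $a_k>0$
\[
E_k(\bar\xi)=(a_k)^2(u_{k+1}-u_k)\,P\!\left(\frac{\xi_{k+1}-v_k}{a_k},\frac{\xi_k-v_k}{a_k}\right),
\]
with $P(x,y)=-\ln(F(x)-F(y))$ and the conventions $\xi_0=-\infty$, $\xi_{d+1}=+\infty$, while for $a_k=0$
\[
E_k(\bar\xi)=\tfrac12(u_{k+1}-u_k)(\xi_{k+1}-v_k)^2.
\]
First I would check that each $E_k$ is convex on the convex cone $\Omega$. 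For $a_k>0$ the factor $(a_k)^2(u_{k+1}-u_k)$ is positive and the inner map $(\xi_k,\xi_{k+1})\mapsto((\xi_{k+1}-v_k)/a_k,(\xi_k-v_k)/a_k)$ is affine and sends $\Omega$ into the half-plane $x>y$ (since $\xi_{k+1}>\xi_k$ there), so convexity of these terms follows from Lemma~\ref{lem2}; at the boundary indices, where one argument equals $\mp\infty$, it follows from Corollary~\ref{cor1}. The remaining terms are positive multiples of squares, hence convex. Thus $E$ is a finite sum of convex functions, and so is convex on $\Omega$.

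For strict convexity it suffices to show that along every nonzero direction at least one summand is strictly convex, the others being convex. Take distinct $\bar\xi^{(1)},\bar\xi^{(2)}\in\Omega$, put $\bar h=\bar\xi^{(1)}-\bar\xi^{(2)}\neq0$, and note that the whole segment stays in $\Omega$ by convexity. Choose an index $j\in\{1,\ldots,d\}$ with $h_j\neq0$ and examine the summand $E_{j-1}$. For $j\ge2$ with $a_{j-1}>0$ this summand is $P$ precomposed with an invertible affine map in the variables $(\xi_{j-1},\xi_j)$, hence strictly convex in $(\xi_{j-1},\xi_j)$ by Lemma~\ref{lem2}; for $j=1$ with $a_0>0$ the second argument is $-\infty$ and strict convexity in $\xi_1$ comes from Corollary~\ref{cor1}; and when $a_{j-1}=0$ the summand $\tfrac12(u_j-u_{j-1})(\xi_j-v_{j-1})^2$ is plainly strictly convex in $\xi_j$. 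In every case, since $h_j\neq0$, the map $t\mapsto E_{j-1}(\bar\xi^{(2)}+t\bar h)$ is strictly convex, so that $E_{j-1}(t\bar\xi^{(1)}+(1-t)\bar\xi^{(2)})<tE_{j-1}(\bar\xi^{(1)})+(1-t)E_{j-1}(\bar\xi^{(2)})$ for $t\in(0,1)$, while all other summands satisfy the corresponding non-strict inequality. Summing yields the strict inequality for $E$, which is precisely strict convexity.

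The only delicate points are bookkeeping: one must verify that for each coordinate $j$ the index $k=j-1$ lies in the admissible range $0\le k\le n-1$ and that $\xi_j$ is a genuine variable ($j\le d$), both of which hold because $d\in\{n-1,n\}$; and one must route the endpoint conventions $\xi_0=-\infty$, $\xi_{d+1}=+\infty$ through Corollary~\ref{cor1} rather than Lemma~\ref{lem2}. I expect this indexing together with the boundary cases to be the main, though routine, obstacle. Observe in particular that the top term $E_{n-1}$, present only when $a_{n-1}>0$, is never needed by the argument, since every variable $\xi_j$ is already controlled by the summand $E_{j-1}$.
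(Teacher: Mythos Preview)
Your argument is correct and follows essentially the same approach as the paper: decompose $E=\sum_k E_k$, use Lemma~\ref{lem2} and Corollary~\ref{cor1} to see each summand is convex, and then obtain strict convexity from the observation that every coordinate $\xi_j$ is controlled by the strictly convex summand $E_{j-1}$. The only cosmetic difference is that the paper phrases the strictness step via the Hessian (showing $D^2E(\bar\xi)\zeta\cdot\zeta=0\Rightarrow\zeta=0$ by reading off $\zeta_{k+1}=0$ from each $P_k$), whereas you work directly with the convexity inequality along a segment; the underlying logic and the index bookkeeping are identical.
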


\begin{proof}
For $k=0,\ldots,n-1$ we denote $P_k(\bar\xi)=-\ln (F((\xi_{k+1}-v_k)/a_k)-F((\xi_k-v_k)/a_k))$ if $a_k>0$, 
and $P_k(\bar\xi)=(\xi_{k+1}-v_k)^2$ if $a_k=0$. In view of (\ref{E}) the entropy $E(\bar\xi)$ is a linear combination of the functions $P_k$ with positive coefficients, and convexity of the entropy readily follows from the statements of
Lemma~\ref{lem2} and Corollary~\ref{cor1}. To establish the strict convexity, we have to demonstrate that the Hessian matrix $D^2 E(\bar\xi)$ is strictly positive. Assume that for some $\zeta=(\zeta_1,\ldots,\zeta_d)\in\R^d$
\begin{equation}\label{form}
D^2 E(\bar\xi)\zeta\cdot\zeta=\sum_{i,j=1}^d\frac{\partial^2 E(\bar\xi)}{\partial\xi_i\partial\xi_j}\zeta_i\zeta_j=0.
\end{equation}
Since $E(\bar\xi)$ is a linear combination of convex functions $P_k(\bar\xi)$ with positive coefficients, we find that 
$$
D^2 P_k(\bar\xi)\zeta\cdot\zeta=0 \quad \forall k=0,\ldots,n-1.
$$
This can be written in the form
\begin{align*}
\sum_{i,j=k,k+1}\frac{\partial^2 P_k(\bar\xi)}{\partial\xi_i\partial\xi_j}\zeta_i\zeta_j=0 \ \mbox{ if } 0<k<n-1, a_k>0;\\
\frac{\partial^2 P_k(\bar\xi)}{\partial\xi_{k+1}^2}\zeta_{k+1}^2 \ \mbox{ if } k=0 \mbox{ or } a_k=0.
\end{align*}
In view of Lemma~\ref{lem2} and Corollary~\ref{cor1} the functions $P_k$ in above equalities are strictly convex as functions of either two variables
$(\xi_k,\xi_{k+1})$ or single variable $\xi_{k+1}$. Therefore, these equalities imply that in any case $\zeta_{k+1}=0$, $k=0,\ldots,n-2$, and  $\zeta_n=0$ if $a_{n-1}=0$ (when $d=n$). We conclude that all coordinates $\zeta_i=0$, $i=1,\ldots,d$. Hence, equality (\ref{form}) can hold only for $\zeta=0$ and the matrix $D^2 P(\bar\xi)>0$ for all $\bar\xi\in\Omega$. This completes the proof.
\end{proof}

\section{The variational formulation}

Let $\bar\xi_0=(\xi_1,\ldots,\xi_d)\in\Omega$ be the unique minimum point of $E(\bar\xi)$. The necessary and sufficient condition for $\bar\xi_0$ to be a minimum point is the following one
\begin{equation}\label{extr}
\nabla E(\bar\xi_0)\cdot p\ge 0 \quad \forall p\in T(\bar\xi_0)=\{ \ p\in\R^d \ | \ \exists h>0 \ \bar\xi_0+hp\in \Omega \ \},
\end{equation}
so that $T(\bar\xi_0)$ is the tangent cone to $\Omega$ at the point $\bar\xi_0$. If $\bar\xi_0\in\Int\Omega$ then
$T(\bar\xi_0)=\R^d$ and (\ref{extr}) reduces to the requirement $\nabla E(\bar\xi_0)=0$. As we have already demonstrated, this requirement coincides with jump conditions (\ref{c++}), (\ref{c+0}), (\ref{c0+}), (\ref{c00}) for all $k=1,\ldots,d$.
But these conditions are equivalent to the statement that the function (\ref{sol}) is an e.s. of (\ref{1}), (\ref{2}).
In the general situation when $\bar\xi_0$ can belong to the boundary of $\Omega$, the coordinates of $\bar\xi_0$ may coincides. Let $\xi_k=\cdots=\xi_l=c$ be a maximal family of coinciding coordinates, that is, $\xi_{k-1}<\xi_k=\xi_l<\xi_{l+1}$ (it is possible here that $k=l$). Then, as is easy to realize, the vector $p=(p_1,\ldots,p_d)$, with arbitrary increasing coordinates $p_k\le\cdots\le p_l$ and with zero remaining coordinates, belong to the tangent cone $T(\bar\xi_0)$. In view of (\ref{extr})
$$
\sum_{i=k}^l\frac{\partial}{\partial\xi_i} E(\bar\xi_0)p_i\ge 0
$$
for any such a vector. Using the summation by parts formula, we realize that the above condition is equivalent to the following requirements
\begin{align}\label{4a}
\sum_{i=k}^l\frac{\partial}{\partial\xi_i} E(\bar\xi_0)=0, \\
\label{4b}
\sum_{i=j}^l\frac{\partial}{\partial\xi_i} E(\bar\xi_0)\ge 0, \ k<j\le l.
\end{align}
Recall that $a_i=0$ for $k\le i<l$. By the direct computation we find
\begin{align*}
\frac{\partial}{\partial\xi_i} E(\bar\xi_0)=(u_i-u_{i-1})(\xi_i-v_{i-1}), \quad k<i<l, \\
\frac{\partial}{\partial\xi_k} E(\bar\xi_0)=\left\{\begin{array}{lcr} (u_k-u_{k-1})(\xi_k-v_{k-1}) & , & a_{k-1}=0, \\
-A(u)'(c-) & , & a_{k-1}>0; \end{array}\right. \\
\frac{\partial}{\partial\xi_l} E(\bar\xi_0)=A(u)'(c+),
\end{align*}
where $A(u)'(c\pm)$ are given by (\ref{der-}), (\ref{der+}). Putting these expressions into (\ref{4a}), (\ref{4b}), we obtain exactly the jump conditions (\ref{RGb}), (\ref{Olb}).
Therefore, the function (\ref{sol}) corresponding to the point $\bar\xi_0$ is an e.s. of (\ref{1}), (\ref{2}).
Conversely, if (\ref{sol}) is an e.s. then relations (\ref{4a}), (\ref{4b}) holds for all groups of coinciding
coordinates. As is easy to verify, this is equivalent to the criterion (\ref{extr}). We have proved our main result.

\begin{theorem}\label{th1}
The function (\ref{sol}) is an e.s. of (\ref{1}), (\ref{2}) if and only if $\bar\xi_0=(\xi_1,\ldots,\xi_d)$ is the minimum point of the entropy $E(\bar\xi)$.
\end{theorem}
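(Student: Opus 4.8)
The plan is to establish the chain of equivalences: the function (\ref{sol}) determined by a point $\bar\xi$ is an e.s.\ of (\ref{1}), (\ref{2}) if and only if $\bar\xi$ satisfies the variational criterion (\ref{extr}), which by strict convexity of the entropy (Proposition~\ref{pro2}) holds precisely when $\bar\xi$ is the global minimizer of $E$. Existence and uniqueness of that minimizer are already in hand: coercivity (Proposition~\ref{pro1}) together with continuity of $E$ produces a global minimum on a compact sublevel set, and strict convexity forces it to be unique. It remains to tie the minimality to the e.s.\ property through the discontinuity relations.

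First I would note that (\ref{sol}) automatically solves equation (\ref{1}) in the classical sense inside every self-similar sector $\xi_k<\xi<\xi_{k+1}$: where $a_k>0$ it is built from the general solution of the reduced ODE $(v_k-\xi)u'-a_k^2u''=0$, and where $a_k=0$ it is constant. Hence being an e.s.\ reduces entirely to verifying the Rankine--Hugoniot conditions (\ref{RG1a}), (\ref{RGa}) and the Oleinik condition (\ref{Ola}) along each ray $\xi=\xi_k$. Condition (\ref{RG1a}) holds automatically, since $A(u)$ is constant across any jump (because $a(u)=0$ between the one-sided values there), and (\ref{Ola}) need only be checked at the nodal values $k=u_j$ because $\varphi$ is piecewise affine. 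Collecting coinciding coordinates into a maximal block $\xi_k=\cdots=\xi_l=c$, these requirements become exactly (\ref{RGb}) and (\ref{Olb}); when the block is a single index $l=k$ they collapse to one of (\ref{c++})--(\ref{c00}).

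Next I would match these jump relations to the variational criterion. Computing $\nabla E(\bar\xi)$ yields $\partial E/\partial\xi_i=(u_i-u_{i-1})(\xi_i-v_{i-1})$ for indices interior to a block, together with the endpoint values at $\xi_k$ and $\xi_l$ recorded just before the theorem, which involve the diffusive fluxes $A(u)'(c\pm)$ of (\ref{der-}), (\ref{der+}). For each maximal block the admissible directions of the tangent cone $T(\bar\xi_0)$ are the vectors $p$ with nondecreasing coordinates $p_k\le\cdots\le p_l$ supported on the block; applying summation by parts to the inequality $\sum_{i=k}^l(\partial E/\partial\xi_i)p_i\ge 0$ over all such $p$ converts (\ref{extr}) into the single equality (\ref{4a}) together with the one-sided inequalities (\ref{4b}). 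Substituting the computed partials turns (\ref{4a}), (\ref{4b}) into precisely (\ref{RGb}), (\ref{Olb}). Running this correspondence in both directions across every block shows that (\ref{sol}) is an e.s.\ iff (\ref{extr}) holds, and by convexity iff $\bar\xi$ is the minimizer; in the interior case $T(\bar\xi_0)=\R^d$ and (\ref{extr}) collapses to $\nabla E(\bar\xi_0)=0$, that is, to (\ref{c++})--(\ref{c00}).

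The step I expect to be most delicate is the summation-by-parts equivalence between the cone inequality (\ref{extr}) and the split conditions (\ref{4a})--(\ref{4b}): one must verify that testing against the full family of monotone directions $p$ on a block reproduces exactly the boundary Oleinik inequalities without over- or under-determining them, and that the endpoint derivatives of $E$ match the diffusive flux jumps $A(u)'(c\pm)$ in their several cases (according as $a_{k-1}$ and $a_l$ vanish or not). Once this bookkeeping is checked block by block, the theorem follows; uniqueness of the e.s.\ from the Carrillo--Kruzhkov theory and uniqueness of the minimizer from strict convexity then guarantee that the two distinguished points coincide.
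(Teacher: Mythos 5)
Your proposal is correct and follows essentially the same route as the paper: reduce the e.s.\ property to the jump conditions (\ref{RGb}), (\ref{Olb}) on maximal blocks of coinciding coordinates, identify these via the computed partials of $E$ and summation by parts with the tangent-cone criterion (\ref{extr}), and invoke Propositions~\ref{pro1} and \ref{pro2} for existence and uniqueness of the minimizer. The delicate step you flag (the equivalence of the cone inequality with (\ref{4a})--(\ref{4b})) is exactly the point the paper also passes over with ``as is easy to verify,'' so your treatment matches the paper's in both structure and level of detail.
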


\begin{remark}\label{rem1}
Adding to the entropy (\ref{E}) the constant $$\sum_{k=0,\ldots,n-1, a_k>0} (a_k)^2(u_{k+1}-u_k)\ln ((u_{k+1}-u_k)/a_k),$$ we obtain the alternative variant of the entropy
\begin{align}\label{E1}
E_1(\bar\xi)=-\sum_{k=0,\ldots,n-1, a_k>0} (a_k)^2(u_{k+1}-u_k)\ln\left(\frac{F((\xi_{k+1}-v_k)/a_k)-F((\xi_k-v_k)/a_k)}{(u_{k+1}-u_k)/a_k}\right) \nonumber\\ +\frac{1}{2}\sum_{k=0,\ldots,n-1, a_k=0} (u_{k+1}-u_k)(\xi_{k+1}-v_k)^2.
\end{align}
If we consider the values $v_k,a_k$ as a piecewise constant approximation of an arbitrary velocity function $v(u)$ and, respectively, a diffusion function $a(u)\ge 0$ then, passing in (\ref{E1}) to the limit as $\max(u_{k+1}-u_k)\to 0$, we find that the entropy $E_1(\bar\xi)$ turns into the variational functional
\begin{align*}
J(\xi)=-\int_{\{u\in [\alpha,\beta], a(u)>0\}} (a(u))^2\ln(F'((\xi(u)-v(u))/a(u))\xi'(u))du+ \\ \frac{1}{2}\int_{\{u\in [\alpha,\beta], a(u)=0\}} (\xi(u)-v(u))^2du,
\end{align*}
where $\xi(u)$ is an increasing function on $[\alpha,\beta]$, which is expected to be the inverse function to a self-similar solution $u=u(\xi)$ of the problem (\ref{1}), (\ref{2}). Taking into account that
\begin{align*}
\ln(F'((\xi(u)-v(u))/a(u))\xi'(u))=\ln F'((\xi(u)-v(u))/a(u))+\ln\xi'(u)= \\ -\frac{(\xi(u)-v(u))^2}{2a^2(u)}+\ln\xi'(u),
\end{align*}
we may simplify the expression for the functional $J(\xi)$
\begin{equation}\label{var}
J(\xi)=\int_\alpha^\beta [(\xi(u)-v(u))^2/2-(a(u))^2\ln(\xi'(u))]du.
\end{equation}
We see that this functional is strictly convex. The corresponding Euler-Lagrange equation has the form
\begin{equation}\label{EL}
\xi(u)-v(u)+((a(u))^2/\xi'(u))'=0.
\end{equation}
Since $u'(\xi)=1/\xi'(u)$, $u=u(\xi)$, we can transform (\ref{EL}) as follows $$\xi(u)-v(u)+((a(u))^2 u'(\xi))'_u=0.$$ Multiplying this equation by $u'(\xi)$, we obtain the equation
$$
(a^2u')'=(v-\xi)u', \quad u=u(\xi),
$$
which is exactly our equation (\ref{1}) written in the self-similar variable.
\end{remark}

\begin{remark}\label{rem2}
In the case of conservation laws (\ref{con}) the e.s. $u=u(\xi)$ of (\ref{con}), (\ref{2}) is piecewise constant, and, by expression (\ref{sol}),
$$
u(\xi)=u_k, \quad \xi_k<\xi<\xi_{k+1}, \ k=0,\ldots,n,
$$
where $-\infty=\xi_0<\xi_1\le\cdots\le\xi_n<\xi_{n+1}=+\infty$. In this case the entropy function is particularly simple, it is the quadratic function
$$
E(\bar\xi)=\frac{1}{2}\sum_{k=1}^n (u_k-u_{k-1})(\xi_k-v_{k-1})^2,
$$
defined on the closed polyhedral cone
$$
\Omega=\{ \ \bar\xi=(\xi_1,\ldots,\xi_n)\in\R^n \ | \ \xi_{k+1}\ge\xi_k \ \forall k=1,\ldots,n-1 \ \}.
$$
Existence and uniqueness of a minimal point in this case is trivial. By Theorem~\ref{1} and Remark~\ref{rem1} we obtain new, variational formulation of the entropy solution.
\end{remark}

\section*{Acknowledgments}
The research was supported by the Russian Science Foundation, grant 22-21-00344.

\end{document}